\numberwithin{equation}{section}
\newcommand{\Z}{{\mathbb Z}}
\newcommand{\Q}{{\mathbb Q}}
\newcommand{\C}{{\mathbb C}}
\DeclareMathOperator{\End}{End} 
\DeclareMathOperator{\Mat}{Mat}
\newtheorem{lemma}{Lemma}
\newtheorem{theorem}{Theorem}
\newtheorem{proposition}[lemma]{Proposition}
\theoremstyle{definition}
\newtheorem{definition}{Definition}
\newtheorem{remark}{Remark}
\title{Finite-dimensional modules for 
the polynomial ring in one variable
as a vertex algebra}
\author{Kenichiro Tanabe\footnote{Partially supported by JSPS
Grant-in-Aid for Scientific Research No. 17740002.}\\\\
Department of Mathematics\\
Hokkaido University\\
Kita 10, Nishi 8, Kita-Ku, Sapporo, Hokkaido, 060-0810\\
Japan\\\\
ktanabe@math.sci.hokudai.ac.jp}
\date{}
\begin{document}
\maketitle
\begin{abstract}
A commutative associative algebra $A$ over $\C$ with a derivation 
is one of the simplest examples of a vertex algebra.
However, the differences between the modules for $A$ as a vertex algebra
and the modules for $A$ as an associative algebra
are not well understood. 

In this paper, I give the classification of 
finite-dimensional indecomposable untwisted or twisted modules for 
the polynomial ring in one variable
over ${\mathbb C}$ as a vertex algebra. 
\end{abstract}

\section{Introduction}
 
In \cite{B}, Borcherds gave a construction of a vertex algebra from any 
commutative associative algebra with a derivation.
Let $A$ be a commutative associative algebra with identity over $\C$ and 
$D$ a derivation of $A$, which is an endomorphism of $A$ satisfying 
$D(ab)=(Da)b+a(Db)$ for all $a,b\in A$.
For $a\in A$, we define $Y(a,x)\in(\End A)[[x]]$ by
\begin{align*}
Y(a,x)b&=\sum_{i=0}^{\infty}\dfrac{1}{i!}(D^ia)bx^{i}
\end{align*}
for $b\in A$. Then, $(A,Y,1)$ is a vertex algebra (\cite{B}, \cite[Example 3.4.6]{LL}).
Note that $(A,Y,1)$ is not a vertex operator algebra except the case that 
$A$ is finite dimensional and $D=0$ (cf. \cite[Remark 3.4.7]{LL}).
Conversely, a vertex algebra $V$ which satisfies
$Y(u,x)\in(\End V)[[x]]$ for all $u\in V$ comes from a commutative associative algebra with a derivation.

Let us consider $A$-modules as a vertex algebra. 
Let $M$ be an $A$-module as an associative algebra.
For $a\in A$, we define $Y_M(a,x)\in(\End M)[[x]]$ by
\begin{align*}
Y(a,x)u&=\sum_{i=0}^{\infty}\dfrac{1}{i!}(D^ia)ux^{i}
\end{align*}
for $u\in M$. Then, $(M,Y_M)$ is an $A$-module as a vertex algebra.
Conversely, an $A$-module $(M,Y_M)$ as a vertex algebra which satisfies
$Y_M(a,x)\in(\End M)[[x]]$ for all $a\in A$ comes from an $A$-module as an associative algebra.
However, as pointed out by Borcherds \cite{B},
there is no guarantee that every $A$-module as a vertex algebra 
comes from an $A$-module as an associative algebra.
That is, there might exists an $A$-module $(W,Y_W)$ as a vertex algebra 
which satisfies $Y_W(a,x)\not\in(\End W)[[x]]$ for some $a\in A$.
As far as I know, there is no example of a
commutative associative algebra with a derivation
whose modules as a vertex algebra are well understood. 
This is the motivation of this paper.

We investigate the modules for 
the polynomial ring $\C[t]$ with a derivation $D$ as a vertex algebra. 
We give a necessary and sufficient condition on $D$ that there exist finite dimensional
$\C[t]$-modules as a vertex algebra
which do not come from $\C[t]$-modules as an associative algebra.
For such a derivation, we give the classification of finite dimensional indecomposable $\C[t]$-modules as a vertex algebra
which do not come from $\C[t]$-modules as an associative algebra. 
Moreover, 
we obtain similar results for $g$-twisted $\C[t]$-modules
for any finite automorphism $g$ of $\C[t]$.
These classification results say that every finite dimensional indecomposable module for
the fixed point subalgebra $\C[t]^{g}=\{a\in\C[t]\ |\ ga=a\}$
is contained in some finite dimensional indecomposable untwisted or twisted $\C[t]$-module.
This result is interesting since it reminds us of
the following conjecture on vertex operator algebras: 
let $V$ be a vertex operator algebra and $G$ a finite automorphism group.
It is conjectured that under some conditions on $V$,
every irreducible module for the fixed point vertex operator subalgebra $V^G$
is contained in some irreducible $g$-twisted 
$V$-module for some $g\in G$ (cf.\cite{DVVV}).

This paper is organized as follows.
In Sect.2 we recall 
some properties 
of vertex algebras and their modules.
In Sect.3 we give the classification of 
finite dimensional indecomposable untwisted or twisted $\C[t]$-modules
as a vertex algebra
which do not come from $\C[t]$-modules as an associative algebra. 

\section{Preliminary}

We assume that the reader is familiar with the basic knowledge on
vertex algebras as presented in \cite{B,DLM1,LL}.

Throughout this paper, 
$\zeta_p$ is
a primitive $p$-th root of unity for a positive integer $p$
and $(V,Y,{\mathbf 1})$ is a vertex algebra.
Recall that $V$ is the underlying vector space, 
$Y(\cdot,x)$ is the linear map from $V$ to $(\End V)[[x,x^{-1}]]$,
and ${\mathbf 1}$ is the vacuum vector.
Let ${\mathcal D}$ be the endomorphism of $V$
defined by ${\mathcal D}v=v_{-2}{\mathbf 1}$ for $v\in V$.
Let $E_n$ denote the $n\times n$ identity matrix.
 
First, we recall some results in \cite{B} for a vertex algebra constructed from a 
commutative associative algebra with a derivation.

\begin{proposition}{\rm\cite{B}}\label{proposition:comm-alg}
The following hold:
\begin{enumerate}
\item
Let $A$ be a commutative associative algebra with identity element $1$ over $\C$ and 
$D$ a derivation of $A$.
For $a\in A$, define $Y(a,x)\in(\End A)[[x]]$ by
\begin{align*}
Y(a,x)b&=\sum_{i=0}^{\infty}\dfrac{1}{i!}(D^ia)bx^{i}
\end{align*}
for $b\in A$. Then, $(A,Y,1)$ is a vertex algebra.
\item
Let $(V,Y,{\mathbf 1})$ be a vertex algebra which satisfies
$Y(u,x)\in(\End V)[[x]]$ for all $u\in V$.
Define a multiplication on $V$ by $u v=u_{-1}v$ for $u,v\in V$. 
Then, $V$ is a commutative associative algebra with identity element ${\mathbf 1}$ and 
${\mathcal D}$ is a derivation of $V$.
\end{enumerate}
\end{proposition}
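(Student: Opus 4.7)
My plan is to verify both parts directly from the defining axioms, taking advantage of two simplifications: commutativity of $A$ in part (1), and the absence of nonnegative modes in $Y(u,x)$ in part (2). For part (1), I would first check the vacuum axiom: $Y(\mathbf{1},x)b=b$ reduces to $D^i(1)=0$ for $i\geq 1$, which follows from $D(1)=D(1\cdot 1)=2D(1)$. Creation is immediate since $Y(a,x)\mathbf{1}=\sum_{i\geq 0}\frac{x^i}{i!}D^i a\in a+xA[[x]]$, and taking the coefficient of $x$ confirms that the canonical translation operator $\mathcal{D}v=v_{-2}\mathbf{1}$ coincides with $D$ on $A$.

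Next, a one-line Leibniz computation verifies $[D,Y(a,x)]=Y(Da,x)=\tfrac{d}{dx}Y(a,x)$. Since every $Y(a,x)$ acts by multiplication in a commutative algebra, $[Y(a,x_1),Y(b,x_2)]=0$, so weak commutativity is trivial. The substantive step is weak associativity: expanding
\begin{equation*}
Y(Y(a,x_0)b,x_2)c=\sum_{i,k\geq 0}\sum_{j=0}^{k}\frac{x_0^i x_2^k}{i!\,k!}\binom{k}{j}(D^{i+j}a)(D^{k-j}b)c
\end{equation*}
by Leibniz and reindexing $m=i+j$, $n=k-j$ collapses the $j$-sum via $\sum_{j=0}^{m}\binom{m}{j}x_0^{m-j}x_2^j=(x_0+x_2)^m$, giving $Y(a,x_0+x_2)Y(b,x_2)c$. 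Combined with weak commutativity this yields the Jacobi identity. This combinatorial collapse is the main calculational obstacle, but it is routine.

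For part (2), the hypothesis $Y(u,x)\in(\End V)[[x]]$ means $u_n=0$ for all $n\geq 0$, so the algebraic information lives entirely in the modes $u_{-1},u_{-2},\ldots$. The equations $\mathbf{1}\cdot v=\mathbf{1}_{-1}v=v$ and $u\cdot\mathbf{1}=u_{-1}\mathbf{1}=u$ are exactly the vacuum and creation axioms. Commutativity $u_{-1}v=v_{-1}u$ is the constant-in-$x$ coefficient of the skew-symmetry identity $Y(u,x)v=e^{x\mathcal{D}}Y(v,-x)u$. For associativity, no negative substitutions are required, so weak associativity $Y(Y(u,x_0)v,x_2)w=Y(u,x_0+x_2)Y(v,x_2)w$ is an identity in $V[[x_0,x_2]]$; setting $x_0=x_2=0$ gives $(u_{-1}v)_{-1}w=u_{-1}(v_{-1}w)$. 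Finally, the derivation property follows from extracting the coefficient of $x^0$ in the standard identity $[\mathcal{D},Y(u,x)]=\tfrac{d}{dx}Y(u,x)$, which gives $[\mathcal{D},u_{-1}]=u_{-2}=(\mathcal{D}u)_{-1}$ and hence $\mathcal{D}(u_{-1}v)=(\mathcal{D}u)_{-1}v+u_{-1}(\mathcal{D}v)$.
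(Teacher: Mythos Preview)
Your argument is correct in both parts. The paper itself does not prove this proposition: it is quoted as a result of Borcherds \cite{B} (with a pointer to \cite[Example~3.4.6]{LL}), so there is no in-paper proof to compare against. Your write-up supplies exactly the standard verification one finds in those references: in part~(1) the exponential identity $Y(Y(a,x_0)b,x_2)=Y(a,x_0+x_2)Y(b,x_2)$ via the Leibniz/binomial collapse, together with trivial locality, is the usual route; in part~(2) reading off the constant terms of skew-symmetry, weak associativity, and the translation identity is again the standard extraction. One minor remark: in part~(2) you invoke weak associativity as an identity in $V[[x_0,x_2]]$ and then set $x_0=x_2=0$. This is legitimate precisely because the hypothesis $Y(u,x)\in(\End V)[[x]]$ kills all poles, so the usual prefactor $(x_0+x_2)^l$ in the weak associativity relation can be taken with $l=0$; it would be worth saying this explicitly.
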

\begin{proposition}{\rm\cite{B}}\label{proposition:comm-module}
Let $A$ be a commutative associative algebra with identity element $1$  
over $\C$ and $D$ a derivation of $A$.
Let $(A,Y,1)$ be the vertex algebra constructed from 
$A$ and $D$ in Proposition \ref{proposition:comm-alg}.
\begin{enumerate}
\item
Let $M$ be an $A$-module as an associative algebra.
For $a\in A$, define $Y_M(a,x)\in(\End M)[[x]]$ by
\begin{align*}
Y(a,x)u&=\sum_{i=0}^{\infty}\dfrac{1}{i!}(D^ia)ux^{i}
\end{align*}
for $u\in M$. Then, $(M,Y_M)$ is an $A$-module as a vertex algebra.
\item
Let $(M,Y_M)$ be an $A$-module as a vertex algebra which satisfies
$Y(a,x)\in(\End M)[[x]]$ for all $a\in A$.
Define an action of $A$ on $M$ by $au=a_{-1}u$ for $a\in A$ and $u\in M$. 
Then, $M$ is an $A$-module as an associative algebra.
\end{enumerate}
\end{proposition}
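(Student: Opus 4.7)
The plan is to prove both parts by closely paralleling the proof of Proposition \ref{proposition:comm-alg}, now with the $A$-module $M$ playing the role that $A$ itself played there. The key observation throughout is that since $A$ is commutative, the endomorphisms of $M$ arising from the $A$-action all commute with one another, so no nontrivial commutator issues arise.

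For part (1), I would verify the two module axioms in turn. The vacuum axiom $Y_M(\mathbf{1}, x) u = u$ is immediate once one notes that $D(1) = D(1 \cdot 1) = 2 D(1)$ forces $D^i 1 = 0$ for all $i \geq 1$. The substantive content is the module Jacobi identity. Since $D^i a$ and $D^j b$ commute as operators on $M$, one has $Y_M(a, x_1) Y_M(b, x_2) u = Y_M(b, x_2) Y_M(a, x_1) u$, and since both series lie in $(\End M)[[x_1, x_2]]$, standard delta-function manipulations reduce the Jacobi identity to the weak associativity
\[ Y_M(a, x_1) Y_M(b, x_2) u = Y_M(Y(a, x_1 - x_2) b, x_2) u. \]
Writing $Y_M(a, x) u = (e^{xD} a) u$ symbolically, this is the Leibniz identity $e^{x_2 D}(fg) = (e^{x_2 D} f)(e^{x_2 D} g)$ applied to $f = e^{(x_1 - x_2) D} a$ and $g = b$; expanded order by order, it is the Leibniz rule $D^\ell((D^k a) b) = \sum_m \binom{\ell}{m} (D^{k+m} a)(D^{\ell - m} b)$ matched against the obvious expansion of the left-hand side.

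For part (2), I would argue in the reverse direction. The unit axiom $Y_M(\mathbf{1}, x) = \mathrm{id}_M$ gives $\mathbf{1}_{-1} u = u$, so the proposed action $a \cdot u := a_{-1} u$ is unital. For associativity $(ab) u = a(bu)$, the hypothesis $Y_M(a, x) \in (\End M)[[x]]$ eliminates all negative powers of $x$, so the module associativity
\[ Y_M(a, x_0 + x_2) Y_M(b, x_2) u = Y_M(Y(a, x_0) b, x_2) u \]
holds on the nose, with no regularizing factor $(x_0 + x_2)^N$ required. Extracting the coefficient of $x_0^0 x_2^0$ yields $a_{-1}(b_{-1} u) = (a_{-1} b)_{-1} u$, and applying Proposition \ref{proposition:comm-alg}(2) to identify $a_{-1} b$ with the commutative associative product $ab$ in $A$ gives the desired associativity.

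The main obstacle is the same in both directions: correctly extracting the weak associativity relation from the module Jacobi identity in the special regime where all operators lie in $(\End M)[[x]]$, and confirming that the coefficient-wise matching agrees with the Leibniz expansion of $D^\ell$. Once that equivalence is in hand, the remainder is bookkeeping that parallels the Borcherds construction for the algebra $A$ itself essentially verbatim.
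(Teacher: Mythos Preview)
The paper does not prove this proposition; it is quoted from Borcherds \cite{B} as background, with no argument supplied. Your sketch is correct and is the standard route: in part (1) the vanishing of $a_k b$ for $k\ge 0$ in $A$ makes all $Y_M(a,x)$ mutually commute, and since they lie in $(\End M)[[x]]$ the Jacobi identity collapses to the iterate formula $Y_M(Y(a,x_0)b,x_2)=Y_M(a,x_0+x_2)Y_M(b,x_2)$, which is exactly the exponentiated Leibniz rule $e^{x_2 D}(cb)=(e^{x_2 D}c)(e^{x_2 D}b)$ with $c=e^{x_0 D}a$; in part (2) the absence of negative powers lets you read off $(a_{-1}b)_{-1}u=a_{-1}(b_{-1}u)$ directly. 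There is nothing in the paper to compare your approach against, but nothing is missing from your outline either.
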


For an automorphism $g$ of $V$ of finite order $p$,
set $V^r=\{u\in V\ |\ gu=\zeta_p^{r}u\}, 0\leq r\leq p-1$.
We recall the definition of $g$-twisted $V$-modules.
\begin{definition}\label{def:weak-twisted}
A $g$-twisted $V$-module $M$ is a vector space equipped with a
linear map
\begin{equation*}
Y_M(\,\cdot\,,x) : V\ni v  \mapsto Y_M(v,x) = \sum_{i \in \Z/p}
v_i x^{-i-1} \in (\End M)[[x^{1/p},x^{-1/p}]]
\end{equation*}
which satisfies the following conditions:

\begin{enumerate}
\item $Y_M(u,x) = \sum_{i \in r/p+\Z}u_i x^{-i-1}$ for $u \in V^r$.
\item $Y_M(u,x)w\in M((x^{1/p}))$ for $u \in V$ and $w \in M$.
\item $Y_M({\mathbf 1},x) = \mathrm{id}_M$.
\item For $u \in V^r$, $v \in V^{s}$, 
$m\in r/T+\Z,\ n\in s/T+\Z$, and $l\in\Z$,
\begin{align*}
&\sum_{i=0}^{\infty}\binom{m}{i}
(u_{l+i}v)_{m+n-i} \\
& = 
\sum_{i=0}^{\infty}\binom{l}{i}(-1)^i
\big(u_{l+m-i}v_{n+i}+(-1)^{l+1}v_{l+n-i}u_{m+i}\big).
\end{align*}
\end{enumerate}
\end{definition}

The following result is well known in the case of $V$-modules 
with a minor change of the conditions
(cf. \cite[Proposition 4.8]{K} and see Remark \ref{remark:weak} below).
Using the same argument as in the case of $V$-modules, one can show the case of twisted $V$-modules.
\begin{proposition}\label{proposition:iff-1}
Let $g$ be an automorphism of $V$ of finite order $p$,
$M$ a vector space, and 
$Y_{M}(\cdot,x)$ a linear map from
$V$ to $(\End M)[[x^{1/p},x^{-1/p}]]$ such that
for all $0\leq r\leq p-1$ and all $u\in V^r$,
$Y_{M}(u,x)=\sum_{i\in r/p+\Z}u_{i}x^{-i-1}$.
Then, $(M,Y_M)$ is a $g$-twisted $V$-module
if and only if the following five conditions hold:
\begin{itemize}
\item[{\rm (M1)}] For $u\in V$  and $w\in M$, $Y_M(u,x)w\in M((x^{1/p}))$.
\item[{\rm (M2)}]  $Y_M({\mathbf 1},x)= \mathrm{id}_M$.
\item[{\rm (M3)}]  For $u\in V^{r},v\in V^{s}$,  
$m\in r/p+\Z$, and $n\in s/p+\Z$, 
$[u_{m},v_{n}]=\sum_{i=0}^{\infty}\binom{m}{i}(u_{i}v)_{m+n-i}$.
\item[{\rm (M4)}]  
For $u\in V^{r}$, $v\in V^{s}$, $m\in r/p+\Z$, and $n\in s/p+\Z$, 
\begin{align*}
\sum_{i=0}^{\infty}\binom{m}{i}(u_{-1+i}v)_{m+n-i}&=
\sum_{i=0}^{\infty}(u_{-1+m-i}v_{n+i}+v_{-1+n-i}u_{m+i}).
\end{align*}
\item[{\rm (M5)}]  For $u\in V$, $Y_M({\mathcal D}u,x)=dY_M(u,x)/dx$.
\end{itemize}
\end{proposition}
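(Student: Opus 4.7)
The proof follows the standard equivalence used for untwisted $V$-modules. The plan is to match each axiom in Definition \ref{def:weak-twisted} with the list (M1)--(M5): condition (2) is literally (M1) and condition (3) is literally (M2), so the content lies in showing that the Borcherds identity (4), given (M1) and (M2), is equivalent to the conjunction of (M3), (M4), and (M5).

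For the forward direction, I would derive (M3) by specializing (4) at $l=0$; since $\binom{0}{i}=\delta_{i,0}$ and $(-1)^{0+1}=-1$, the right-hand side collapses to $u_m v_n - v_n u_m = [u_m,v_n]$, which is (M3). Setting $l=-1$ and using the identity $\binom{-1}{i}(-1)^i = 1$ for every $i\geq 0$ converts (4) directly into (M4). For (M5), I would use $\mathcal{D}u = u_{-2}\mathbf{1}$ together with (4) applied to $v=\mathbf{1}$; because $\mathbf{1}_i = \delta_{i,-1}\mathrm{id}_M$ by (M2), only a single term on each side survives, and comparing coefficients of the resulting power of $x$ produces $Y_M(\mathcal{D}u,x) = dY_M(u,x)/dx$.

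For the reverse direction, I would prove (4) for every integer $l$ assuming (M1)--(M5). The cases $l=0$ and $l=-1$ are (M3) and (M4) themselves. For $l\geq 1$, induction on $l$ based on (M3) suffices, since the left-hand side of (4) is already a finite sum once $l$ is fixed. For $l\leq -2$, one uses (M5) to rewrite the modes $u_{l+i}$ in terms of modes of $\mathcal{D}^{-1-l}u/(-1-l)!$ appearing in the $l=-1$ position of (M4); the binomial bookkeeping collapses correctly via a Vandermonde-type identity of the form $\sum_j \binom{m}{j}\binom{-1-l}{i-j} = \binom{m-1-l}{i}$, yielding (4) for the remaining values of $l$.

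The main obstacle is the reverse implication, specifically the manipulation of binomial coefficients needed to produce (4) for arbitrary $l$ from (M3)--(M5). Once the identity is established at $l=-1$ and $l=0$, the extension to other $l$ is essentially the calculation used in the untwisted case; as indicated in the statement, the only adjustment for twisted modules is to keep track of the fractional modes $m\in r/p+\Z$ and the fractional powers of $x$, which commute with all the algebraic manipulations above and therefore do not affect the argument.
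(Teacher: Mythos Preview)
The paper does not actually prove this proposition; it cites \cite[Proposition~4.8]{K} for the untwisted case and simply asserts that ``the same argument'' handles the twisted case. Your forward direction---specializing the Borcherds identity to $l=0$, to $l=-1$, and to $v=\mathbf{1}$ (with, say, $l=-2$, $n=0$) to extract (M3), (M4), (M5)---is correct and is precisely how these formulas are obtained in the standard references, so here you have supplied more than the paper does.

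In the reverse direction your induction for $l\ge 0$ from (M3) via Pascal's rule is fine, but the step for $l\le -2$ does not work as you describe it. Substituting $\mathcal{D}^{k}u/k!$ with $k=-1-l$ into (M4) gives, through
\[
(\mathcal{D}^{k}u)_{-1+i}=(-1)^{k}k!\binom{i-1}{k}\,u_{l+i},
\]
an unwanted factor $\binom{i-1}{k}$ attached to each term $(u_{l+i}v)_{m+n-i}$ on the left, together with mismatched index shifts on the right; a single Vandermonde identity in $m$ does not remove these. The argument the paper is pointing to (in \cite{K}, and for twisted modules in \cite{DLM1,LL}) is not a componentwise induction on $l$: one passes to the three-variable Jacobi identity, shows that (M3) amounts to weak commutativity and that (M4) together with (M5) yields the appropriate weak associativity, and then recovers the full Jacobi identity by the usual formal delta-function computation, from which (4) for every $l\in\Z$ follows by extracting coefficients. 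If you want a purely mode-by-mode proof you will need a more careful two-parameter recursion rather than the single substitution you sketch; the generating-function route is both shorter and is what the cited references actually do.
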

\begin{remark}\label{remark:weak}
In Proposition \ref{proposition:iff-1}, 
(M4) can be replaced by the following simpler condition 
in the case of $V$-modules:
\begin{center}
{\rm (M4)$^{\prime}$}\quad
For $u,v\in V$ and $n\in \Z$, 
$(u_{-1}v)_{n}=
\sum_{i=0}^{\infty}(u_{-1-i}v_{n+i}+v_{-1+n-i}u_{i})$.
\end{center}
However, we need (M4) in the case of twisted $V$-modules.
\end{remark}
\begin{remark}\label{remark:no-twist}
Let $g$ be an automorphism of $V$ of finite order $p>1$ and 
$(M,Y_M)$ a $g$-twisted $V$-module.
Suppose that there exist $1\leq r\leq p-1$ and $u\in V^{r}$ such that $Y_M(u,x)\neq 0$.
Then, since $Y_M(u,x)=x^{-r/p}(\End M)[[x,x^{-1}]]$,
we have $Y_M(D^{k}u,x)=d^kY_M(u,x)/dx^k\not\in(\End M)[[x^{1/p}]]$
for sufficiently large integer $k$.
Therefore, there is no $g$-twisted $V$-module $(M,Y_M)$ which satisfies
$Y_M(\oplus_{r=1}^{p-1}V^r,x)\neq 0$ and $Y_M(u,x)\in (\End M)[[x^{1/p}]]$ for all $u\in V$. 
\end{remark}
\section{Finite-dimensional $\C[t]$-modules as a vertex algebra}

In this section we classify all finite-dimensional indecomposable 
untwisted or twisted $\C[t]$-modules as a vertex algebra 
which do not come from $\C[t]$-modules as an associative algebra. 
For a given derivation $D$ of $\C[t]$,
we write $(\C[t],D)$ instead of $\C[t]$ when we need to express $D$.
It is easy to see that 
any derivation $D$ of $\C[t]$ can be expressed as $D=f(t)d/dt$ where $f(t)\in\C[t]$.

Let $g$ be an automorphism of $\C[t]$ of finite order $p>1$.
Then, $g(t)=\alpha t+\beta$ where $\alpha$ is a primitive $p$-th root of unity and $\beta\in\C$.
For the automorphism $h$ of $\C[t]$ defined by $h(t)=(\alpha-1)t+\beta$,
we have $h^{-1}gh(t)=\alpha t$. Namely, 
any automorphism of $\C[t]$ of finite order conjugates to
an automorphism which preserves $\C t$.
Throughout this section, 
for any automorphism $g$ of $\C[t]$ of order $p$ which preserves $\C t$,
we always take $\C[t]^{1}=\{a\in\C[t]\ |\ ga=\xi a\}$ where $\xi$
is the primitive $p$-th root of unity determined by $gt=\xi t$.

We have the following lemma by applying 
Proposition \ref{proposition:iff-1} in the case of 
finite-dimensional untwisted or twisted $\C[t]$-modules.

\begin{lemma}\label{lemma:finite}
Let $D=f(t)d/dt, f(t)\in\C[t]$, be a derivation of $\C[t]$,
$g$ an automorphism of $\C[t]$ of finite order $p$
which preserves $\C t$,
$M$ a finite-dimensional vector space,
and $T(x)=\sum_{i\in -1/p+\Z}T_{(i)}x^{i}\in(\End M)[[x^{1/p},x^{-1/p}]]$.
Then, there exists a $g$-twisted $(\C[t],D)$-module $(M,Y_M)$ as a vertex algebra
with $Y_M(t,x)=T(x)$ if and only if the following three conditions hold:
\begin{itemize}
\item[(i)] $T(x)\in(\End M)((x^{1/p}))$.
\item[(ii)] For all $i,j\in -1/p+\Z$, $T_{(i)}T_{(j)}=T_{(j)}T_{(i)}$.
\item[(iii)] $dT(x)/dx=f(T(x))$.
\end{itemize}
In this case, for $P(t)\in\C[t]$ we have $Y_M(P(t),x)=P(Y_M(t,x))$ and hence
$(M,Y_M)$ is uniquely determined by $T(x)$.
In the case $p=1$, $(M,Y_M)$ does not come from 
a $\C[t]$-module as an associative algebra if and only if 
$T(x)\not\in(\End M)[[x]]$.
In the case $p\geq 2$, if $Y_M(t,x)\neq 0$ then
$Y_M(t,x)\not\in(\End M)[[x^{1/p}]]$.
\end{lemma}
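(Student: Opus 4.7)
The plan is to apply Proposition \ref{proposition:iff-1} and translate each of the five module conditions (M1)--(M5) into a condition on the single series $T(x)=Y_M(t,x)$, exploiting that $\C[t]$ is generated as a vertex algebra by the single element $t$. The key bridging fact, needed in both directions, is the formula $Y_M(P(t),x)=P(T(x))$ for every $P\in\C[t]$.

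For the forward direction, condition (i) is just (M1) with $u=t$. Condition (ii) follows from (M3) with $u=v=t$: in the commutative vertex algebra $(\C[t],Y,1)$ we have $t_i t=0$ for every $i\ge 0$ (since $Y(t,x)t = t^2+(Dt)t\,x+\cdots \in (\End\C[t])[[x]]$), so (M3) collapses to $[t_m,t_n]=0$. To derive (iii) I would first prove $Y_M(uv,x)=Y_M(u,x)Y_M(v,x)$: specialize (M4) to $m=0$ so that the left side becomes $(u_{-1}v)_n=(uv)_n$, then use (ii) to commute the two factors on the right and reindex, identifying that sum with the $x^{-n-1}$-coefficient of $Y_M(u,x)Y_M(v,x)$. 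Induction on $\deg P$ then gives $Y_M(P(t),x)=P(T(x))$, after which (M5) for $u=t$ reads $dT/dx = Y_M(\mathcal{D}t,x) = Y_M(f(t),x) = f(T(x))$, which is (iii).

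For the converse, define $Y_M(P(t),x):=P(T(x))$ and check (M1)--(M5). (M1) is clear since $(\End M)((x^{1/p}))$ is closed under taking polynomials in $T(x)$; (M2) is tautological; (M3) has both sides equal to $0$ by (ii) and by $P(t)_i Q(t)=0$ for $i\ge 0$; (M4) is the above coefficient identification run backwards, with both sides reducing to the appropriate mode of $(PQ)(T(x))$; and (M5) follows from the chain rule $dP(T(x))/dx=P'(T(x))f(T(x))$ together with $Y_M(DP(t),x)=Y_M(f(t)P'(t),x)=f(T(x))P'(T(x))$ and commutativity. The formula $Y_M(P(t),x)=P(T(x))$ is built into the construction, and uniqueness of $(M,Y_M)$ given $T(x)$ is then immediate.

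For the final two assertions, the $p=1$ case is immediate from Proposition \ref{proposition:comm-module}(2), since $P(T(x))\in(\End M)[[x]]$ for all $P$ if and only if $T(x)\in(\End M)[[x]]$. The $p\ge 2$ case is the one non-mechanical step of the proof and is what I expect to be the main obstacle: assuming $T(x)\in(\End M)[[x^{1/p}]]$, all of its $x$-powers lie in $(-1/p+\Z)\cap[0,\infty)$, so its smallest possible power is $(p-1)/p$; then $dT/dx$ carries a potentially nonzero term at $x^{-1/p}$, while $f(T(x))$ remains in $(\End M)[[x^{1/p}]]$ and therefore has no negative powers of $x^{1/p}$. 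Matching the $x^{-1/p}$-coefficient in (iii) forces the lowest mode of $T(x)$ to vanish, and repeating the argument inductively on the next allowed powers $x^{(kp-1)/p}$ kills every $T_{(i)}$, giving $T(x)=0$.
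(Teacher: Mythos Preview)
Your strategy is the paper's strategy: invoke Proposition~\ref{proposition:iff-1}, establish $Y_M(ab,x)=Y_M(a,x)Y_M(b,x)$ from (M4), and read off (iii) from (M5). There is, however, one real slip in the forward direction.

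You write ``specialize (M4) to $m=0$.'' In the twisted setting this move is not available: for $u\in\C[t]^{r}$ with $1\le r\le p-1$, condition (M4) only concerns $m\in r/p+\Z$, and $0$ does not lie in that coset when $p>1$. This is precisely the point of Remark~\ref{remark:weak}, which warns that the simplified form (M4)$'$ (your $m=0$ identity) does \emph{not} suffice for twisted modules. The paper's fix is to keep $m\in r/p+\Z$ arbitrary and observe that the left side of (M4) collapses anyway: since $Y(a,x)\in(\End\C[t])[[x]]$ in the commutative vertex algebra $(\C[t],Y,1)$, one has $a_{-1+i}b=0$ for every $i\ge 1$, so
\[
\sum_{i\ge 0}\binom{m}{i}(a_{-1+i}b)_{m+n-i}=(a_{-1}b)_{m+n}=(ab)_{m+n}
\]
for \emph{every} admissible $m$. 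With this correction your argument goes through: on the right side one uses (M3) (not merely (ii), which only gives $[t_m,t_n]=0$) to get $[a_m,b_n]=0$ for all $a,b\in\C[t]$, reindexes, and recognises the $(m+n)$-th mode of $Y_M(a,x)Y_M(b,x)$; finite-dimensionality of $M$ is used here to ensure $Y_M(a,x)\in(\End M)((x^{1/p}))$ so that this product is a well-defined Laurent series.

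For the final $p\ge 2$ assertion the paper simply invokes Remark~\ref{remark:no-twist} together with $Y_M(P(t),x)=P(T(x))$: if $T(x)\in(\End M)[[x^{1/p}]]$ then every $Y_M(a,x)$ lies in $(\End M)[[x^{1/p}]]$, contradicting that remark. Your direct induction from (iii) on the lowest surviving exponent $(kp-1)/p$ is a correct, self-contained alternative; just note that after the first step you are no longer matching a negative power but rather the coefficient of $x^{((k-1)p-1)/p}$, which is still absent from $f(T(x))$ because $c_0$ sits at $x^0\ne x^{((k-1)p-1)/p}$ and every $c_jT(x)^j$ with $j\ge 1$ starts at a strictly higher exponent.
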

\begin{proof}
Assume that there exists a $g$-twisted $(\C[t],D)$-module $(M,Y_M)$ as a vertex algebra
with $Y_M(t,x)=T(x)$. 
It follows form Proposition \ref{proposition:iff-1} that 
$(M,Y_M)$ satisfies (M1)--(M5) in the proposition.
Since $M$ is finite-dimensional,  we have
\begin{align}
Y_M(a,x)\in (\End M)((x^{1/p}))\label{eqn:lb}
\end{align}
for all $a\in\C[t]$ by (M1).
Let $a\in\C[t]^r$ and $b\in \C[t]^s$. Since $a_kb=0$ for all nonnegative integer $k$ in $\C[t]$
,we have
\begin{align}
[a_i,b_j]&=\sum_{k=0}^{\infty}\binom{i}{k}(a_kb)_{i+j-k}=0\label{eqn:commutative}
\end{align}
for all $i,j\in\Z/p$ by (M3). 
Therefore, we get (i) and (ii).
It follows from (\ref{eqn:lb}), (\ref{eqn:commutative}), and (M4) that
\begin{align*}
&(ab)_{i+j}=(a_{-1}b)_{i+j}=\sum_{k=0}^{\infty}\binom{i}{k}(a_{-1+k}b)_{i+j-k}\\
&=\sum_{k=0}^{\infty}(a_{-1+i-k}b_{j+k}+b_{-1+j-k}a_{i+k})
=\sum_{k\in\Z}a_{-1+i-k}b_{j+k}
\end{align*}
for $i\in r/p+\Z$ and $j\in s/p+\Z$, or equivalently $Y_M(ab,x)=Y_M(a,x)Y_M(b,x)$.
This and (M1) show that
for $P(t)\in\C[t]$,
\begin{align}
Y_M(P(t),x)&=P(Y_M(t,x)).\label{eqn:asso}
\end{align}
By (\ref{eqn:asso}) and (M5), (iii) holds since
\begin{align*}
\dfrac{d}{dx}Y_M(t,x)=Y_M(Dt,x)=Y_M(f(t),x)=f(Y_M(t,x)).
\end{align*}
In the case $p=1$, it follows from (\ref{eqn:asso}) that  $(M,Y_M)$ does not come from 
a $\C[t]$-module as an associative algebra if and only if 
$T(x)\not\in(\End M)[[x]]$.
In the case $p\geq 2$, it follows from
Remark \ref{remark:no-twist} and (\ref{eqn:asso})
that if $Y_M(t,x)\neq 0$ then
$Y_M(t,x)\not\in(\End M)[[x^{1/p}]]$ since $t\in\C[t]^1$.

Conversely,
assume that (i)--(iii) hold for $T(x)$.
For $P(t)\in\C[t]$, set $Y_M(P(t),x)=P(T(x))$.
We show that $(M,Y_M)$ satisfies (M1)--(M5) in Proposition \ref{proposition:iff-1}.
Note that $Y(t^k,x)\in x^{-k/p}(\End M)((x))$ for all nonnegative integer $k$.
Let $Y_M(a,x)=\sum_{i\in\Z/p}a_ix^{-i-1}$ and $Y_M(b,x)=\sum_{i\in\Z/p}b_ix^{-i-1}$
for $a,b\in\C[t]$.
Then, we have $Y_M(a,x)\in (\End M)((x^{1/p}))$ and
$Y_M(ab,x)=Y_M(a,x)Y_{M}(b,x)$ by the definition of $Y_M$.
We also have $[a_i,b_j]=0$ for all $i,j\in\Z/p$ by (ii).
Therefore, (M1)--(M4) hold.
By (iii),
\begin{align*}
\dfrac{d}{dx}Y_M(t,x)&=\dfrac{d}{dx}T(x)=f(T(x))\\
&=Y_M(f(t),x)=Y_M(Dt,x).
\end{align*}
Assume that for $a,b\in\C[t]$, $dY_M(a,x)/dx=Y_M(Da,x)$ and $dY_M(b,x)/dx=Y_M(Db,x)$
hold. Then,
\begin{align*}
Y_M(D(ab),x)&=Y_M((Da)b+a(Db),x)\\
&=Y_M(Da,x)Y_M(b,x)+Y_M(a,x)Y_M(Db,x)\\
&=(\dfrac{d}{dx}Y_M(a,x))Y_M(b,x)+Y_M(a,x)\dfrac{d}{dx}Y_M(b,x)\\
&=\dfrac{d}{dx}(Y_M(a,x)Y_M(b,x)).
\end{align*}
Therefore, (M5)  holds.
We conclude that $(M,Y_M)$ is a $g$-twisted $(\C[t],D)$-module as a vertex algebra.
\end{proof}
\begin{remark}For any finite automorphism $g$ of $\C[t]$,
it follows from Lemma \ref{lemma:finite} that there always exists a unique
$g$-twisted $(\C[t],D)$-module $(M,Y_M)$ with $Y_M(t,x)=0$.
By (\ref{eqn:asso}), for $P(t)=\sum_{i=0}^{m}P_it^i\in\C[t]$ 
we have $Y_M(P(t),x)=P_0$. Therefore, this is a trivial case.
\end{remark}
We introduce some notation.
Let $R$ be a commutative ring and let $\Mat_n(R)$ denote 
the set of all $n\times n$ matrices with entries in $R$. 
Let $E_{ij}$ denote the matrix whose $(i,j)$ entry is $1$ and all other entries are $0$.
Define $\Delta_{k}(R)=\{(x_{ij})\in\Mat_n(R)\ |\ x_{ij}=0\mbox{ if $i+k\neq j$}\}$ for $0\leq k\leq n$.  
Then, for $a\in \Delta_k(R)$ and $b\in \Delta_{l}(R)$, we have $ab\in \Delta_{k+l}(R)$.
For $X=(x_{ij})\in \Mat_n(R)$ and $k=0,\ldots,n-1$, 
define the matrix $X^{(k)}=\sum_{i=1}^{n}x_{i,i+k}E_{i,i+k}\in \Delta_k(R)$.
Note that for a upper triangular matrix $X$ we have $X=\sum_{k=0}^{n-1}X^{(k)}$.
Let $J_n$ denote the following $n\times n$ matrix:
\begin{align*}
J_n&=\begin{pmatrix}
0&1&0&\cdots&0\\
\vdots&\ddots&\ddots&\ddots&\vdots\\
\vdots&&\ddots&\ddots&0\\
\vdots&&&\ddots&1&\\
0&\cdots&\cdots&\cdots&0
\end{pmatrix}.
\end{align*}

For $G(x)\in x\C[[x]]$ and $m\in\Q$,
we define $(x(1+G(x)))^{m}=x^{m}\sum_{i=0}^{\infty}\binom{m}{i}G(x)^i$.
For $F(x)\in \C((x))$, $m\in\Q$, and a nilpotent $n\times n$ matrix $H$, 
we define $(E_n+F(x)H)^{m}=\sum_{i=0}^{\infty}\binom{m}{i}(F(x)H)^i$.
This sum is finite since $H^{n}=0$ and hence $(E_n+F(x)H)^{m}\in\Mat_n(\C)((x))$.

The following lemma is used to construct finite-dimensional indecomposable 
untwisted or twisted $\C[t]$-modules as a vertex algebra.
\begin{lemma}\label{lemma:nil}
Let $p$ be a positive integer, $m$ a nonzero rational number,
$r(x)$ a nonzero element of $\C((x))$, 
$F(x)\in\C((x))\setminus\C[[x]]$,  
and $H$ a nilpotent $n\times n$ matrix.
Set $U(x)=\sum_{i\in-1/p+\Z}U_{(i)}x^i=x^{-1/p}r(x)(E_n+F(x)H)^m\in\Mat_{n}(\C)((x^{1/p}))$.
Let ${\mathcal U}$ denote the subalgebra of $\Mat_{n}(\C)$
generated by all $U_{(i)}, i\in-1/p+\Z$.
Then, $H$ is an element of ${\mathcal U}$ and 
$U_{(i)}$ is a polynomial in $H$ for every $i\in-1/p+\Z$.
Moreover, $\C^n$ is an indecomposable ${\mathcal U}$-module if and only if 
$H$ conjugates to $J_n$.
\end{lemma}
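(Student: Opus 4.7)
The plan is to first expand $U(x)$ to see that $\mathcal{U}\subseteq\C[H]$, then to use a linear-independence argument to upgrade this to $\mathcal{U}=\C[H]$, and finally to invoke the Jordan classification of $\C[H]$-modules. Since $H^n=0$, the binomial series truncates and
\[
U(x)=\sum_{k=0}^{n-1}f_k(x)H^k,\qquad f_k(x)=\binom{m}{k}x^{-1/p}r(x)F(x)^k\in\C((x^{1/p})).
\]
Reading off coefficients yields $U_{(i)}=\sum_{k=0}^{n-1}(f_k)_iH^k$, a $\C$-polynomial in $H$; this proves the second assertion of the lemma and shows $\mathcal{U}\subseteq\C[H]$.

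The heart of the proof is the reverse inclusion. Let $k_{\max}$ be the largest $k\leq n-1$ with $\binom{m}{k}\neq 0$; since $m\neq 0$ we have $k_{\max}\geq 1$ whenever $n\geq 2$ (the case $n=1$ forces $H=0$ and the lemma is trivial). A short case analysis (nonnegative integer $m$ versus all other nonzero rationals) shows $\binom{m}{k}\neq 0$ for every $0\leq k\leq k_{\max}$. I would then show that $f_0,f_1,\ldots,f_{k_{\max}}$ are $\C$-linearly independent in $\C((x^{1/p}))$: cancelling the nonvanishing factors $x^{-1/p}r(x)\binom{m}{k}$ reduces this to the linear independence of the powers $1,F(x),\ldots,F(x)^{k_{\max}}$, and any nontrivial $\C$-linear relation among these powers would make $F(x)$ a root of a nonzero polynomial over the algebraically closed field $\C$, forcing $F(x)\in\C$ and contradicting $F(x)\notin\C[[x]]$.

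By the standard duality between functions and their coefficient sequences, linear independence of $f_0,\ldots,f_{k_{\max}}$ is equivalent to the statement that the coefficient vectors $((f_k)_i)_{k=0}^{k_{\max}}$ span $\C^{k_{\max}+1}$ as $i$ varies over $-1/p+\Z$. Hence suitable $\C$-linear combinations of the $U_{(i)}$ produce each of $E_n,H,\ldots,H^{k_{\max}}$; in particular $H\in\mathcal{U}$, and then products of these elements yield every $H^j$, giving $\mathcal{U}=\C[H]$. With this identification, $\C^n$ as a $\mathcal{U}$-module decomposes as a direct sum of cyclic $\C[H]$-modules indexed by the Jordan blocks of $H$; since $H$ is nilpotent, indecomposability is equivalent to $H$ having a single Jordan block, i.e., to $H$ conjugating to $J_n$.

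The main obstacle is the linear-independence step, and this is precisely where the hypothesis $F(x)\in\C((x))\setminus\C[[x]]$ enters essentially: without it $F(x)$ could be a constant, in which case all powers of $F(x)$ would collapse to scalar multiples of $1$ and the whole argument would fail. Everything else (the truncation of the binomial expansion, the duality, and the Jordan-form classification) is bookkeeping or standard linear algebra.
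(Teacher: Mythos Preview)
Your argument is correct, but it takes a different route from the paper's.  The paper first strips off the scalar factor $x^{-1/p}r(x)$ to reduce to $\tilde U(x)=(E_n+F(x)H)^m$, observes that the algebra generated by the $\tilde U_{(j)}$ coincides with $\mathcal U$, and then exploits the hypothesis that $F$ has a genuine pole: writing $F(x)=\sum_{i\ge N}F_{(i)}x^i$ with $N<0$, the lowest-degree term of $\tilde U(x)=\sum_{i=0}^d\binom{m}{i}F(x)^iH^i$ is $\binom{m}{d}F_{(N)}^dH^dx^{Nd}$, so $H^d\in\mathcal U$; subtracting $\binom{m}{d}F(x)^dH^d$ and repeating peels off $H^{d-1},\ldots,H$ in turn.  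Your proof instead uses only that $F$ is nonconstant: the powers $1,F,\ldots,F^{k_{\max}}$ are $\C$-linearly independent in the field $\C((x))$ (else $F$ would satisfy a nontrivial polynomial over the algebraically closed field $\C$ and hence be a scalar), and the coefficient-duality trick then extracts each $H^k$ from the $U_{(i)}$.  The paper's peeling argument is more explicit---it tells you exactly which coefficient first reveals $H^d$---while your linear-independence argument is cleaner and in fact works under the weaker hypothesis $F\notin\C$ rather than $F\notin\C[[x]]$; in particular your closing remark that the full strength of $F\notin\C[[x]]$ is ``essential'' slightly overstates what your own proof needs.
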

\begin{proof}
We may assume that $H$ is not the zero-matrix.
Let $F(x)=\sum_{i=N}^{\infty}F_{(i)}x^i, F_{(N)}\neq 0$ with $N<0$, 
and $r(x)=\sum_{L=0}^{\infty}r_{(i)}x^i, r_{(L)}\neq 0$.
Set $\tilde{U}(x)=\sum_{i=K}^{\infty}\tilde{U}_{(i)}x^i
=(E_n+F(x)H)^m\in\Mat_{n}(\C)((x))$.
Then,
\begin{align}
U(x)&=x^{-1/p}r(x)\tilde{U}(x)=x^{-1/p}(\sum_{i=L}^{\infty}r_{(i)}x^i)
(\sum_{j=K}^{\infty}\tilde{U}_{(j)}x^j)\nonumber\\
&=x^{-1/p}\sum_{k=K+L}^{\infty}
\sum_{L\leq i, K\leq j, i+j=k}r_{(i)}\tilde{U}_{(j)}x^{k}.\label{eqn:ind-k}
\end{align}
It is easy to see from (\ref{eqn:ind-k}) that 
$\tilde{U}_{(j)}$ is in ${\mathcal U}$ for every $j\in\Z$,  and
${\mathcal U}$ is generated by $\tilde{U}_{(j)}$'s.
Since
\begin{align}
\tilde{U}(x)&=\sum_{i=0}^{d}\binom{m}{i}F(x)^iH^i\label{eqn:tildeU}
\end{align}
where $d=\max\{i\in\Z\ |\ \binom{m}{i}\neq 0\mbox{ and }H^i\neq 0\}$,
every $\tilde{U}_{(j)}, j\in\Z$, is a polynomial in $H$ and so is $U_{(j)}$.  
Since $m$ is not zero and $H$ is not the zero-matrix, $d$ is positive.
It follows from (\ref{eqn:tildeU}) that 
the term with the lowest degree of $\tilde{U}(x)$ is $\binom{m}{d}H^dx^{Nd}$
since $N$ is negative. 
Hence, $H^d$ is in ${\mathcal U}$ and 
\begin{align*}
\sum_{i=0}^{d-1}\binom{m}{i}F(x)^iH^i&=\tilde{U}(x)-\binom{m}{d}F(x)^dH^d
\end{align*}
is in ${\mathcal U}((x))$. Applying the same argument to $\sum_{i=0}^{d-1}\binom{m}{i}F(x)^iH^i$,
we have $H^{d-1}$ is in ${\mathcal U}$ and  
$\sum_{i=0}^{d-2}\binom{m}{i}F(x)^iH^i$ is in ${\mathcal U}((x))$.
Continuing in this way we get $H^k\in{\mathcal U}$ for $1\leq k\leq d$.
In particular, $H$ is an element of ${\mathcal U}$.

Since ${\mathcal U}$ is generated by $H$,
$\C^n$ is an indecomposable ${\mathcal U}$-module if and only if 
$H$ conjugates to $J_n$.
\end{proof}
For a vector space $M$ of dimension $n$, 
we identify $\End M$ with $\Mat_n(\C)$  by fixing a basis of $M$. 

Now we state our main theorems.
\begin{theorem}\label{theorem:untwist}
Let $D=f(t)d/dt, f(t)\in\C[t],$ be a derivation of $\C[t]$.
Then, there exists a finite-dimensional $(\C[t],D)$-module as a vertex algebra
which does not come from $\C[t]$-module as an associative algebra
if and only if $f\neq 0$ and $\deg f=2$.
Let $f(t)=c(t-\alpha)(t-\beta)\in\C[t]$ where $\alpha,\beta,c\in\C$ and $c\neq 0$.
Then, for every positive integer $n$, 
any $n$ dimensional indecomposable $(\C[t],D)$-module 
as a vertex algebra which does not come from a $\C[t]$-module as an associative algebra
is isomorphic to the following $(\C[t],D)$-module $(M_n,Y_{M_n})$:
\begin{enumerate}
\item In the case $\alpha=\beta$, $M_n=\C^n$ and 
\begin{align*}
Y_{M_n}(t,x)&=\alpha E_n-\dfrac{1}{c}x^{-1}(E_n-x^{-1}J_n)^{-1}.
\end{align*}
\item In the case $\alpha\neq\beta$, $M_n=\C^n$ and 
\begin{align*}
Y_{M_n}(t,x)&=\alpha E_n+(\alpha-\beta)(-1+\exp(-c(\alpha-\beta)x))^{-1}\\
&\quad\times
(E_n-\dfrac{\exp(-c(\alpha-\beta)x)}{-1+\exp(-c(\alpha-\beta)x)}J_n)^{-1}.
\end{align*}
\end{enumerate}
\end{theorem}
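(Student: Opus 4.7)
My approach is to use Lemma \ref{lemma:finite} to convert the problem into a classification of Laurent series $T(x)\in\Mat_n(\C)((x))$ with pairwise commuting coefficients satisfying $T'(x)=f(T(x))$, having a pole at $x=0$, and making $\C^n$ indecomposable over the commutative subalgebra $\mathcal{A}\subset\Mat_n(\C)$ they generate. Indecomposability forces the finite-dimensional commutative $\C$-algebra $\mathcal{A}$ to be local, so $\mathcal{A}=\C E_n\oplus\mathfrak{m}$ with $\mathfrak{m}$ nilpotent, and I decompose $T(x)=g(x)E_n+N(x)$ with $g\in\C((x))$ the scalar projection and $N\in\mathfrak{m}((x))$.

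For the necessity of $\deg f=2$, the scalar projection satisfies $g'=f(g)$; balancing lowest-order terms of a hypothetical Laurent solution $g=ax^K+\cdots$ with $K<0$ gives $K(\deg f-1)=-1$, whose only negative-integer solution is $K=-1$, $\deg f=2$. Hence for $\deg f\neq 2$ we have $g\in\C[[x]]$, and each $f^{(j)}(g)$ in the rewritten equation $N'=\sum_{j\geq 1}f^{(j)}(g)N^j/j!$ is a power series in $x$. To rule out a pole in $N$, I induct on the nilpotency filtration $\mathcal{A}\supset\mathfrak{m}\supset\mathfrak{m}^2\supset\cdots$: assuming the pole part $N_-$ of $N$ has all coefficients in $\mathfrak{m}^r$, each term $f^{(j)}(g)N^j$ for $j\geq 2$ contributes to the pole part of the RHS only in $\mathfrak{m}^{r+1}$, since any pole contribution is a product of at least one $N_-$-factor in $\mathfrak{m}^r$ with $j-1\geq 1$ factors in $\mathfrak{m}$; modulo $\mathfrak{m}^{r+1}$, the equation for $N_-$ thus reduces to the linear ODE $N_-'=f'(g)N_-$ with power-series coefficients, whose only Laurent solution with purely negative powers is $0$. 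Iterating until $\mathfrak{m}^k=0$ gives $N_-=0$, so $T\in\mathcal{A}[[x]]$.

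For existence, I verify directly that each $Y_{M_n}(t,x)$ in (1) and (2) satisfies conditions (i)--(iii) of Lemma \ref{lemma:finite}: membership in $\Mat_n(\C)((x))$ follows from $(E_n-x^{-1}J_n)^{-1}=\sum_{k=0}^{n-1}x^{-k}J_n^k$; all coefficients are polynomials in $J_n$, so they commute; and the ODE $T'=c(T-\alpha)(T-\beta)$ reduces, via $S=T-\alpha E_n$ and $(A^{-1})'=-A^{-1}A'A^{-1}$, to the identity $S'=cS^2$ in case (1), with case (2) handled analogously after the exponential substitution. For uniqueness, given any indecomposable $T$ with $\deg f=2$ and a pole, I solve the ODE in $\mathcal{A}((x))$: case (1) rearranges $S'=cS^2$ as $(S^{-1})'=-cE_n$, yielding $S^{-1}=H-cxE_n$ for some $H\in\mathcal{A}$, with $H\in\mathfrak{m}$ forced by the pole; case (2) uses $h=(T-\alpha E_n)^{-1}$ to linearize the Riccati equation to $h'+c(\alpha-\beta)h=-cE_n$. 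Applying Lemma \ref{lemma:nil} to the resulting nilpotent parameter then forces it to be conjugate to $J_n$ for $\C^n$ to be indecomposable, and an appropriate change of basis recovers the stated formulas.

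The main obstacle I anticipate is the nilpotent-pole analysis in the necessity direction: the scalar step is clean, but for $\deg f\geq 3$ one must exploit both the nilpotency of $\mathfrak{m}$ (via the filtration induction) and the fact that a linear ODE with power-series coefficients admits no Laurent solutions with a pole. Getting the bookkeeping right at each step of the induction --- in particular, tracking how the nonlinear terms $f^{(j)}(g)N^j$ contribute to the pole part modulo successive powers of $\mathfrak{m}$ --- is the technically delicate step.
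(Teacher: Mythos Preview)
Your proposal is correct and shares the paper's overall architecture: reduce via Lemma~\ref{lemma:finite} to commuting-coefficient Laurent solutions of $T'=f(T)$, split $T$ into scalar plus nilpotent (you via the local decomposition $\mathcal{A}=\C E_n\oplus\mathfrak{m}$, the paper via simultaneous upper-triangularization and the superdiagonal grading $T=\sum_k T^{(k)}$), derive $\deg f=2$ from the scalar ODE, exclude nilpotent poles by induction along a nilpotency filtration, and finish with Lemma~\ref{lemma:nil}. The one substantive difference lies in the classification step. The paper proves uniqueness inductively---showing each $T(x)^{(k)}$ is determined by the lower ones and a single datum $H$ via (\ref{eqn:one}), and in the $\alpha\neq\beta$ case first performing the two changes of variable $x\mapsto y\mapsto z$ to reach (\ref{eqn:induc-z})---and then verifies the closed formula separately. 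You instead integrate the matrix Riccati equation in $\mathcal{A}((x))$ directly, passing to $h=(T-\alpha E_n)^{-1}$ to linearize it in one stroke and reading off the nilpotent parameter from the constant of integration. Your route is shorter and bypasses the auxiliary variables; the paper's inductive template, on the other hand, carries over verbatim to the twisted case $p\geq 2$ of Theorem~\ref{theorem:twist}, where the exponent $-1/p$ in $(E_n+FH)^{-1/p}$ makes a clean inversion trick unavailable.
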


\begin{theorem}\label{theorem:twist}
Let $g$ be an automorphism of $\C[t]$ of finite order $p>1$
which preserves $\C t$,
and $D=f(t)d/dt, f(t)\in\C[t],$ a derivation of $\C[t]$. 
Then, there exists a finite-dimensional $g$-twisted $(\C[t],D)$-module $(M,Y_M)$ with
$Y_M(t,x)\neq 0$
if and only if $f(t)=c_1t+c_{p+1}t^{p+1}$ where $c_1, c_{p+1}\in\C$ and $c_{p+1}\neq 0$.
In this case, for every positive integer $n$, 
any $n$ dimensional indecomposable $g$-twisted  $(\C[t],D)$-module $(M,Y_M)$
which satisfies $Y_M(t,x)\neq 0$
is isomorphic to one of the following $p$ $g$-twisted $(\C[t],D)$-modules $(M^{(p,l)}_n,Y_{M^{(p,l)}_n}),\ 
0\leq l\leq p-1$:
\begin{enumerate}
\item In the case $c_1=0$, $M^{(p,l)}_n=\C^n$ and
\begin{align*}
Y_{M^{(p,l)}_n}(t,x)&=\gamma_{p}\zeta_p^{l}x^{-1/p}(E_n-x^{-1}J_n)^{-1/p}.
\end{align*}
\item
In the case $c_1\neq 0$, $M^{(p,l)}_n=\C^n$ and
\begin{align*}
Y_{M^{(p,l)}_n}(t,x)&=\gamma_{p}\zeta_p^{l}
(\dfrac{-1+\exp(-c_1px)}{-c_1p})^{-1/p}\\
&\quad \times
(E_n-\dfrac{\exp(-c_1px)}{-1+\exp(-c_1px)}J_n)^{-1/p}.
\end{align*}
\end{enumerate}
Here $\gamma_{p}$ is any fixed primitive $p$-th  root of $(-pc_{p+1})^{-1}$.
\end{theorem}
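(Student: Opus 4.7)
The plan is to apply Lemma \ref{lemma:finite}, which reduces the question to finding, for each dimension $n$, a nonzero $T(x)\in\Mat_n(\C)((x^{1/p}))$ whose coefficients $T_{(i)}$ pairwise commute and which satisfies the matrix ODE $dT/dx=f(T(x))$, with the additional constraint $T(x)\notin\Mat_n(\C)[[x^{1/p}]]$ coming from the last clause of that lemma combined with $Y_M(t,x)\neq 0$ and $p>1$. The theorem then reduces to analyzing this ODE --- first to constrain $f$, then to classify the indecomposable solutions.

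For the necessity of $f(t)=c_1 t+c_{p+1}t^{p+1}$, I would compare exponents modulo $1$: every exponent of $T(x)$ lies in $-1/p+\Z$, hence every exponent of $T(x)^n$ lies in $-n/p+\Z$, while $dT/dx$ retains exponents in $-1/p+\Z$. A class-by-class matching, applied inductively from the top degree $d=\deg f$ downward, forces $c_n=0$ whenever $n\not\equiv 1\pmod p$. A leading-power comparison then completes the argument: writing the lowest-order term of $T$ as $T_0\,x^{M-1/p}$ with $T_0\neq 0$ and $M\leq 0$ (the polar condition), and matching the lowest powers of $dT/dx$ and $c_d T^d$, yields $(d-1)(1/p-M)=1$, which together with $d\equiv 1\pmod p$ and $M\in\Z_{\leq 0}$ leaves only $d=p+1$ and $M=0$; the alternative $d=1$ is ruled out because the unique solutions $T(x)=T_0 e^{c_1 x}$ of $dT/dx=c_1 T$ have integer exponents, contradicting the polar condition.

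For the classification, the surviving ODE $dT/dx=c_1 T+c_{p+1}T^{p+1}$ is a matrix Bernoulli equation. Sufficiency is verified for the explicit $T(x)=Y_{M^{(p,l)}_n}(t,x)$ by direct computation: conditions (i) and (ii) of Lemma \ref{lemma:finite} follow from $J_n^n=0$ and Lemma \ref{lemma:nil}, while (iii) is obtained by differentiating and using $(E_n-aJ_n)(E_n-aJ_n)^{-1}=E_n$, with case (2) reduced to case (1) via the substitution $y=(1-e^{-c_1 px})/(c_1 p)$ (which satisfies $dy/dx=e^{-c_1 px}$). For uniqueness, a preliminary minimal-polynomial argument combined with indecomposability shows that the leading coefficient $T_0$ must be a scalar matrix, hence invertible; then the substitution $u:=T^{-p}$ converts the ODE into the linear matrix ODE $du/dx+pc_1 u=-pc_{p+1}E_n$, whose general solution is $u(x)=-pc_{p+1}xE_n+C$ when $c_1=0$ and $u(x)=-(c_{p+1}/c_1)E_n+C e^{-pc_1 x}$ when $c_1\neq 0$, for an arbitrary $C\in\Mat_n(\C)$. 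Factoring $u$ as a scalar prefactor times $(E_n-\psi(x)H)$, where $H$ is a constant matrix built from $C$ and $\psi(x)$ has a simple pole at $0$, the inverse $p$-th power $T=\gamma_p\zeta_p^l u^{-1/p}$ lies in $\Mat_n(\C)((x^{1/p}))$ only when $H$ is nilpotent, so that the binomial expansion of $(E_n-\psi H)^{-1/p}$ truncates; Lemma \ref{lemma:nil} and indecomposability then promote $H$ to a matrix conjugate to $J_n$, recovering the formulas of the theorem. The $p$ choices of $\zeta_p^l$ yield $p$ pairwise non-isomorphic modules, since the scalar leading term $\gamma_p\zeta_p^l E_n$ of $T(x)$ is a conjugation invariant.

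The most delicate step is pinning down the nilpotency of $H$ in the Bernoulli reduction: a priori $C$ is an arbitrary constant matrix, and one must rule out non-nilpotent choices by showing that only nilpotent $H$ produces an element of the correct ring $\Mat_n(\C)((x^{1/p}))$ rather than of a larger completion. The finite-dimensionality of $M$ and its indecomposability enter in an essential way here, channeled through Lemma \ref{lemma:nil}.
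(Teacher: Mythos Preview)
Your proposal hinges on a false claim: that the lowest-order coefficient $T_0$ of $T(x)$ is a scalar matrix. In the modules $M_n^{(p,l)}$ themselves (take $c_1=0$),
\[
Y_{M_n^{(p,l)}}(t,x)=\gamma_p\zeta_p^{l}\,x^{-1/p}\sum_{i=0}^{n-1}\binom{-1/p}{i}(-1)^{i}J_n^{\,i}\,x^{-i},
\]
so for $n\ge 2$ the lowest-order term sits at $x^{-1/p-(n-1)}$ and its coefficient is a nonzero multiple of the nilpotent matrix $J_n^{\,n-1}$. Commutativity together with indecomposability only forces each $T_{(i)}$ to have a single eigenvalue (scalar plus nilpotent), not to be scalar outright. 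This one misstep breaks both halves of your argument. Your leading-power identity $(d-1)(1/p-M)=1$ silently assumes $T_0^{\,d}\neq 0$, and its consequence $M=0$ is flatly contradicted by the examples above, where $M=-(n-1)$; the inductive ``peel off the top degree'' in the class-by-class step fails for the same reason, since from $\sum_{j\equiv r}c_jT^{j}=0$ you cannot isolate $c_{j_0}T^{j_0}$ once $T_0^{\,j_0}=0$; and your justification for the invertibility of $T$ in $\Mat_n\big(\C((x^{1/p}))\big)$, needed to form $u=T^{-p}$, collapses.

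The paper's remedy is to first simultaneously upper-triangularize the commuting family $\{T_{(i)}\}$ over $\C$ and use indecomposability to see that the \emph{entire diagonal} of $T(x)$ is $s(x)E_n$ for a single scalar series $s(x)\in x^{-1/p}\C((x))$; an induction along the superdiagonals then shows $s\notin\C[[x^{1/p}]]$. Your congruence-class and leading-term analysis now applies verbatim to the scalar equation $ds/dx=f(s)$, where the leading coefficient is a nonzero complex number and no nilpotency issue arises. This also yields $\det T=s(x)^n\neq 0$, supplying the invertibility you need; with that in hand, your Bernoulli substitution $u=T^{-p}$ becomes a legitimate and rather elegant alternative to the paper's recursive determination of the superdiagonals $T(x)^{(k)}$.
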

We make a remark on Theorem \ref{theorem:untwist}.
For a derivation $D$ of $\C[t]$,
Theorem \ref{theorem:untwist} says that
the existence of a $(\C[t],D)$-module which satisfies the conditions in the theorem
implies that $D=c(t-\alpha)(t-\beta)d/dt, c\neq 0$.
For the automorphism $\sigma$ of $\C[t]$ defined by $\sigma(t)=c^{-1}t+\alpha$, 
we have
$\sigma(D)=(c(\alpha-\beta)t+t^2)d/dt$, which is the same form as in
Theorem \ref{theorem:twist}.
Since $(\C[t],D)\cong (\C[t], \sigma(D))$ as vertex algebras,
Theorem \ref{theorem:untwist} corresponds to the case $p=1$ in Theorem \ref{theorem:twist}.

We shall show Theorem \ref{theorem:untwist} and Theorem \ref{theorem:twist} simultaneously.
To do this, in Theorem \ref{theorem:untwist}, 
for the derivation $D=(c_1t+c_2t^2)d/dt$
we denote the corresponding $M_n$  by $M_n^{(1,0)}$.
In the following proof, $p$ is a positive integer and 
$g=\mathrm{id}_{\C[t]}$ when we consider Theorem \ref{theorem:untwist}.
\begin{proof}
We use Lemma \ref{lemma:finite}.
In Theorem \ref{theorem:untwist},
for $D=c(t-\alpha)(t-\beta)d/dt, c\neq 0$,
we show that for every $n$, $(M_n,Y_{M_n})$ satisfies (i)--(iii) in Lemma \ref{lemma:finite}
and $Y_{M_n}(t,x)\not\in(\End M_n)[[x]]$.
Set $T(x)=\sum_{i\in\Z}T_{(i)}x^{i}=Y_{M_n}(t,x)$.
In the case $\alpha=\beta$, since
\begin{align*}
T(x)&=\alpha E_n-\dfrac{1}{c}x^{-1}(E_n-x^{-1}J_n)^{-1}\\
&=\alpha E_n-\dfrac{1}{c}\sum_{i=0}^{n-1}J_n^{i}x^{-i-1},
\end{align*}
the lowest degree of $T(x)$ is $-n$.
In the case $\alpha\neq \beta$, since
$-1+\exp(-c(\alpha-\beta)x)=\sum_{i=1}^{\infty}(-c(\alpha-\beta)x)^i/i!
\in x\C[[x]]$, the lowest degree of $T(x)$ is also $-n$.
Therefore, (i) holds
and $T(x)\not\in(\End M_n)[[x]]$.
Since $T_{(i)}$ is a polynomial in $J_n$ for each $i\in\Z$, 
(ii) holds.
It is easy to see that (iii)  holds.
Moreover, it follows from Lemma \ref{lemma:nil} that 
$M_n$ is indecomposable.
We conclude that $M_n$ is a $n$ dimensional indecomposable $(\C[t],D)$-module
as a vertex algebra
which does not come from a $\C[t]$-module as an associative algebra.
In Theorem \ref{theorem:twist},
for $D=(c_1t+c_{p+1}t^{p+1})d/dt, c_{p+1}\neq 0$,
the same argument shows that 
$(M^{(p,l)}_n,Y_{M^{(p,l)}_n}), 0\leq l\leq p-1,$ are 
$n$ dimensional indecomposable $g$-twisted $(\C[t],D)$-modules
with $Y_{M_n^{(p,l)}}(t,x)\neq 0$.

Conversely, assume that there exists a finite-dimensional 
$g$-twisted $(\C[t],D)$-module $(M,Y_M)$ as a vertex algebra
with $Y_M(t,x)\neq 0$
which does not come from a $\C[t]$-module as an associative algebra.
We may assume that $M$ is indecomposable.

Assume that $D=0$. 
It follows from (M5) in Proposition \ref{proposition:iff-1} that
$Y_M(a,x)=a_{-1}\in(\End M)[[x]]$ for $a\in \C[t]$.
In Theorem \ref{theorem:untwist} this implies that $M$ comes from a $\C[t]$-module as an 
associative algebra by Proposition \ref{proposition:comm-module}
and in Theorem \ref{theorem:twist} this implies $Y_M(t,x)=0$.
This is a contradiction and hence $D\neq 0$.

Let $f(x)=\sum_{i=0}^{m+1}c_ix^i\in\C[t]$, $c_{m+1}\neq 0$.
We denote $Y_M(t,x)$ by $T(x)=\sum_{i\in -1/p+\Z}T_{(i)}x^{i}$.
By (i) in Lemma \ref{lemma:finite}, $T(x)$ is in $(\End M)((x^{1/p}))$.
By (ii) in Lemma \ref{lemma:finite},  we may assume that $T(x)$ is a upper triangular matrix.  
Let 
\begin{align*}
T(x)^{(k)}=\sum_{i\in -1/p+\Z}T_{(i)}^{(k)}x^{i}\in 
\Delta_{k}((\End M)((x^{1/p})))
\end{align*} for $0\leq k\leq n-1$.
For every $j\in -1/p+\Z$,
since $T_{(j)}^{(0)}$ is the semisimple part of $T_{(j)}$ 
and $M$ is indecomposable,
$T^{(0)}_{(j)}$ is a scalar matrix.
Let $T(x)^{(0)}=s(x)E_n$ where $s(x)=\sum_{i\in -1/p+\Z}s_{(i)}x^{i}\in\C((x^{1/p}))$.
By (iii) in Lemma \ref{lemma:finite}, we have
\begin{align*}
\dfrac{d}{dx}T(x)&=f(T(x))\\
&=\sum_{i=0}^{m+1}c_i(T(x)^{(0)}+\cdots+T(x)^{(n-1)})^i\\
&=\sum_{k=0}^{n-1}\sum_{i=0}^{m+1}c_i
\sum_{j_1+\cdots+j_i=k}T(x)^{(j_1)}\cdots T(x)^{(j_i)}.
\end{align*} 
Since $\sum_{i=0}^{m+1}c_i
\sum_{j_1+\cdots+j_i=k}T(x)^{(j_1)}\cdots T(x)^{(j_i)}\in\Delta_k((\End M)((x^{1/p})))$,
we have 
\begin{align}
\dfrac{d}{dx}s(x)E_n&=\dfrac{d}{dx}T(x)^{(0)}
=\sum_{i=0}^{m+1}c_i(T(x)^{(0)})^i=f(s(x))E_n
\label{eqn:s}
\end{align}
and 
\begin{align*}
&\dfrac{d}{dx}T(x)^{(k)}
=\sum_{i=0}^{m+1}c_i
\sum_{j_1+\cdots+j_i=k}T(x)^{(j_1)}\cdots T(x)^{(j_i)}\\
&=\sum_{i=0}^{m+1}ic_is(x)^{i-1}T(x)^{(k)}+
\sum_{i=0}^{m+1}c_i\sum_{\underset
{\scriptstyle j_1+\cdots+j_i=k}{\scriptstyle 0\leq j_1,\ldots,j_i<k}}T(x)^{(j_1)}\cdots T(x)^{(j_i)}
\end{align*} 
for $1\leq k\leq n-1$, or equivalently
\begin{align}
&\big(\dfrac{d}{dx}-\sum_{i=1}^{m+1}ic_is(x)^{i-1}\big)T(x)^{(k)}\nonumber\\
&=
\sum_{i=0}^{m+1}c_i\sum_{\underset
{\scriptstyle j_1+\cdots+j_i=k}{\scriptstyle 0\leq j_1,\ldots,j_i<k}}T(x)^{(j_1)}\cdots T(x)^{(j_i)}.
\label{eqn:induc}
\end{align} 

We show by induction on $k$ that if $s(x)\in\C[[x^{1/p}]]$, then
$T(x)^{(k)}\in (\End M)[[x^{1/p}]]$.
The case $k=0$ follows from $T(x)^{(0)}=s(x)E_n$.
For $k>0$, suppose that the lowest degree of $T(x)^{(k)}$ is negative. Then,
the lowest degree of the left-hand side of (\ref{eqn:induc}) is also negative.
This is a contradiction since the right-hand side of (\ref{eqn:induc}) is
in $(\End M)[[x^{1/p}]]$ by the induction assumption. 
Therefore, the condition $T(x)\not\in(\End M)[[x^{1/p}]]$ implies $s(x)\not\in\C[[x^{1/p}]]$,
or equivalently the lowest degree $L$ of $s(x)$ is negative. 

By (\ref{eqn:s}),
$s(x)$ satisfies
\begin{align}
\sum_{L-1\leq i\in -1/p+\Z}(i+1)s_{(i+1)}x^{i}
&=\sum_{j=0}^{m+1}c_j
\big(\sum_{L\leq i\in -1/p+\Z}s_{(i)}x^{i}\big)^j.\label{eqn:expand-s}
\end{align}
In (\ref{eqn:expand-s}), the term with the lowest degree of the left-hand side is
$Ls_{(L)}x^{L-1}$ and 
the term with the lowest degree of the right-hand side is
$c_{m+1}s_{(L)}^{m+1}x^{L(m+1)}$.
Comparing these terms, we have 
$mL=-1$ and $L=c_{m+1}s_{(L)}^{m}$. Since $L\in -1/p+\Z$, we have
$m=p, L=-1/p$, and $s_{(-1/p)}=\gamma_{p}\zeta_p^{l}, 0\leq l\leq p-1$. 
Therefore, $\deg f=2$ if $p=1$.
Let us consider the case $p\geq 2$.
The right-hand side of (\ref{eqn:expand-s}) is expanded in the following 
form:
\begin{align*}
c_{p+1}s_{-1/p}^{p+1}x^{-1-1/p}+c_{p}s_{-1/p}^px^{-1}+\cdots+
c_{1}s_{-1/p}x^{-1/p}+c_{0}x^{0}+\cdots.
\end{align*}
Since the left-hand side of (\ref{eqn:expand-s}) is in $x^{-1/p}(\End M)((x))$
and $p\geq 2$, we have
$c_j=0$ for all $j\neq 1,p+1$ and hence $f(t)=c_1t+c_{p+1}t^{p+1}$.

We may assume that $f(t)=c_1t+c_{p+1}t^{p+1}, c_{p+1}\neq 0$ by 
the remark before giving the proof.

First, we shall treat the case $c_1=0$.
In this case (\ref{eqn:s}) becomes
\begin{align}
\dfrac{d}{dx}s(x)&=c_{p+1}s(x)^{p+1}.\label{eqn:diff-s-1}
\end{align}
Solving the differential equation (\ref{eqn:diff-s-1}) in $\C((x^{1/p}))$
under the condition that $s(x)\not\in\C[[x^{1/p}]]$,
we have $s(x)=\gamma_{p}\zeta_{p}^{l}x^{-1/p}, 0\leq l\leq p-1$.
Hence
(\ref{eqn:induc}) becomes
\begin{align}
&\big(\dfrac{d}{dx}+\frac{p+1}{p}x^{-1}\big)T(x)^{(k)}
=\sum_{j\in -1/p+\Z}(j+\dfrac{p+1}{p})T_{(j)}^{(k)}x^{j-1}\nonumber\\
&=
c_{p+1}\sum_{\underset
{\scriptstyle j_1+\cdots+j_{p+1}=k}{\scriptstyle 0\leq j_1,\ldots,j_{p+1}<k}}T(x)^{(j_1)}\cdots T(x)^{(j_{p+1})}.
\label{eqn:one}
\end{align} 
Set $H=T_{(-1-1/p)}^{(1)}+\cdots+T_{(-1-1/p)}^{(n-1)}$,
which is the nilpotent part of $T_{(-1-1/p)}$. 
We shall prove that $T(x)^{(k)}$ is uniquely determined by $s(x)$ and $H$
by induction on $k$.
The case $k=0$ follows from $T(x)^{(0)}=s(x)E_n$.
For $k>0$, $T_{(-1-1/p)}^{(k)}$ is given and the other
$T(x)^{(k)}_{(j)}$ are determined from (\ref{eqn:one}) by induction assumption.

Set 
\begin{align*}
T_{H,l}(x)&=\gamma_{p}\zeta_p^{l}x^{-1/p}
(E_n-x^{-1}\dfrac{p}{\gamma_{p}\zeta_p^{l}}H)^{-1/p}
=\sum_{i\in-1/p+\Z}(T_{H,l})_{(i)}x^{i}.
\end{align*}
Using the same argument in the first part of the proof,
one can show that $T_{H,l}(x)$ satisfies (i)--(iii) in Lemma \ref{lemma:finite},
$T_{H,l}(x)^{(0)}=s(x)$, and 
$(T_{H,l})_{(-1-1/p)}=H$.
Therefore, $T(x)=T_{H,l}(x)$.
It follows from Lemma \ref{lemma:nil} that
$p(\gamma_{p}\zeta_p^{l})^{-1}H$ conjugates to $J_n$ since $M$ is indecomposable.
We conclude that $M$ is isomorphic to $M_n^{(p,l)}$.

Next, we shall treat the case $c_1\neq 0$.
In this case (\ref{eqn:s}) becomes
\begin{align}
\dfrac{d}{dx}s(x)&=c_1s(x)+c_{p+1}s(x)^{p+1}.\label{eqn:diff-s-2}
\end{align}
Solving the differential equation (\ref{eqn:diff-s-2}) in $\C((x^{1/p}))$
under the condition that $s(x)\not\in\C[[x^{1/p}]]$,
we have 
\begin{align*}
s(x)&=\gamma_{p}\zeta_p^{l}(\dfrac{-1+\exp(-c_1px)}{-c_1p})^{-1/p}, 0\leq l\leq p-1.
\end{align*}
Define $y=(-1+\exp(-c_1px))/(-c_1p)\in x\C[[x]]$ and
$\tilde{T}(y)=\gamma_{p}^{-1}\zeta_p^{-l}y^{1/p}T(-\log(1-c_1py)/(c_1p))\in\C((y))$.
Note that $\tilde{T}(y)^{(0)}=E_n$.
Then,
\begin{align*}
\dfrac{dy}{dx}&=\exp(-c_1px)=1-c_1py,\\
\dfrac{dT(x)}{dx}&=\dfrac{dy}{dx}\dfrac{d}{dy}(\gamma_{p}\zeta_p^{l} y^{-1/p}\tilde{T}(y))\\
&=\gamma_{p}\zeta_p^{l} y^{-1/p}\big(
(1-c_1py)\dfrac{d\tilde{T}(y)}{dy}
+\dfrac{-1}{p}y^{-1}\tilde{T}(y)+c_1\tilde{T}(y)\big).
\end{align*}
Therefore, by (iii) in Lemma \ref{lemma:finite} we have
\begin{align}
py(1-c_1py)\dfrac{d\tilde{T}(y)}{dy}&=
\tilde{T}(y)-\tilde{T}(y)^{p+1}.\label{eqn:s-y}
\end{align}
Define $z=-c_{1}py/(1-c_{1}py)\in y\C[[y]]$ and 
$\hat{T}(z)=\sum_{j\in\Z}\hat{T}_{(j)}z^j=\tilde{T}(z/(-c_1p(1-z)))
\in (\End M)((z))$.
Then,
\begin{align*}
\dfrac{dz}{dy}&=\dfrac{-c_1p}{(1-c_1py)^2}=-c_1p(1-z)^2,\\
y(1-c_1py)\dfrac{d\tilde{T}(y)}{dy}&
=\dfrac{-z}{c_{1}p(1-z)^2}\dfrac{dz}{dy}\dfrac{d\hat{T}(z)}{dz}=z\dfrac{d\hat{T}(z)}{dz}.
\end{align*}
By (\ref{eqn:s-y}) we have
\begin{align*}
pz\dfrac{d}{dz}\hat{T}(z)&=\hat{T}(z)-\hat{T}(z)^{p+1}.
\end{align*}
and hence
\begin{align}
p(z\dfrac{d}{dz}+1)\hat{T}(x)^{(k)}
&=-\sum_{\underset
{\scriptstyle j_1+\cdots+j_{p+1}=k}{\scriptstyle 0\leq j_1,\ldots,j_{p+1}<k}}\hat{T}(x)^{(j_1)}\cdots \hat{T}(x)^{(j_{p+1})}.
\label{eqn:induc-z}
\end{align}
for $1\leq k\leq n-1$ since $\hat{T}(z)^{(0)}=E_n$.
Using the same argument as in the case $c_1=0$, 
one can show that 
$\hat{T}(z)$ is uniquely determined by
$H=\hat{T}_{(-1)}^{(1)}+\cdots+\hat{T}_{(-1)}^{(n-1)}$,
which is the nilpotent part of $\hat{T}_{(-1)}$,
under the condition $\hat{T}(z)^{(0)}=E_n$ and that
\begin{align*}
\hat{T}(z)&=(E_n-pH z^{-1})^{-1/p}.
\end{align*}
Since $z=(-1+\exp(-c_1px))/\exp(-c_1px)$ and $\hat{T}(z)=s(x)^{-1}T(x)$,
we have 
\begin{align*}
T(x)&=\gamma_{p}\zeta_p^{l}
(\dfrac{-1+\exp(-c_1px)}{-c_1p})^{-1/p}\\
&\quad \times
(E_n-pH\dfrac{\exp(-c_1px)}{-1+\exp(-c_1px)})^{-1/p}.
\end{align*}
It follows from Lemma \ref{lemma:nil} that
$pH$ conjugates to $J_n$ since $M$ is indecomposable.
We conclude that $M$ is isomorphic to $M_n^{(p,l)}$.
\end{proof}

\begin{remark}
Let $g$ be an automorphism of $\C[t]$ of finite order $p$ which preserves $\C t$ and 
$D=(c_1t+c_{p+1}t^{p+1})d/dt$ with $c_{p+1}\neq 0$.
It is easy to see that $D$ is invariant under the action of $g$ and the fixed point subalgebra 
$\C[t]^{g}=\{a\in\C[t]\ |\ ga=a\}$ of $\C[t]$ is equal to $\C[t^p]$. 
Hence, $\C[t^p]$ is invariant under the action of $D$ and 
$(\C[t^{p}],D)$ is a subalgebra of $(\C[t],D)$.
Moreover,
$(\C[t^{p}],D)\cong (\C[t],\tilde{D})$ as associative algebras over $\C$ 
with derivations where $\tilde{D}=(pc_1t+pc_{p+1}t^2)d/dt$
and hence $(\C[t^{p}],D)\cong (\C[t],\tilde{D})$ as vertex algebras.
Therefore, we have all finite-dimensional indecomposable $(\C[t^{p}],D)$-modules
as a vertex algebra by Theorem \ref{theorem:untwist}. 
We discuss a relation between the finite dimensional $(\C[t^{p}],D)$-modules 
and finite dimensional untwisted or $g$-twisted $(\C[t],D)$-modules as vertex algebras.

By Theorem \ref{theorem:twist} and (\ref{eqn:asso}), we have
\begin{align*}
Y_{M^{(p,l)}_n}(t^{p},x)&=\big(\gamma_{p}\zeta_p^{l}x^{-1/p}(E_n-x^{-1}J_n)^{-1/p}\big)^{p}\\
&=(-pc_{p+1})^{-1}x^{-1}(E_n-x^{-1}J_n)^{-1}
\end{align*}
if $c_1=0$ and
\begin{align*}
&Y_{M^{(p,l)}_n}(t^p,x)\\
&=
\big(\gamma_{p}\zeta_p^{l}
(\dfrac{-1+\exp(-c_1px)}{-c_{1}p})^{-1/p}
(E_n-\dfrac{\exp(-c_1px)}{-1+\exp(-c_1px)}J_n)^{-1/p}\big)^{p}\\
&=\dfrac{-c_1}{c_{p+1}}
(1-\exp(-c_1px))^{-1}
(E_n-\dfrac{\exp(-c_1px)}{-1+\exp(-c_1px)}J_n)^{-1}
\end{align*}
if $c_1\neq 0$.
Therefore, it follows from Theorem \ref{theorem:untwist}
that every finite dimensional indecomposable $(\C[t^p],D)$-module
as a vertex algebra
which does not come from a $\C[t^p]$-module as an associative algebra
is obtained from finite dimensional indecomposable $g$-twisted $(\C[t],D)$-modules.

Moreover,
it is easy to see that
every finite dimensional indecomposable $(\C[t^p],D)$-module 
as an associative algebra
is contained in some finite dimensional indecomposable $(\C[t],D)$-module
as an associative algebra.
Therefore, every finite dimensional indecomposable $(\C[t^p],D)$-module
as a vertex algebra
is contained in some finite 
dimensional indecomposable untwisted or $g$-twisted $(\C[t],D)$-module as a vertex algebra.

\end{remark}

\end{document}